\newcommand{\atoms}[1]{\mathcal{A}_{#1}}
\newcommand{\nonz}[1]{#1^*}
\newcommand{\units}[1]{U(#1)}
\newcommand{\nunits}[1]{#1^\#}
\newcommand{\MCD}{\mathrm{MCD}}
\newcommand{\GCD}{\mathrm{GCD}}
\newcommand{\assoc}[1]{\sim_{#1}}
\let\oldfrac\frac
\renewcommand{\frac}[2]{%
  \mathchoice
    {\oldfrac{#1}{#2}}
    {#1/#2}
    {#1/#2}
    {#1/#2}
}
\newcommand{\aadd}[1]{\mid_{#1}}
\newcommand{\lc}[1]{\mathrm{lc}(#1)}
\newcommand{\mybar}[1]{\overline{#1}}
\newtheorem{theorem}{Theorem}[section]
\newtheorem{lemma}[theorem]{Lemma}
\newtheorem{remark}[theorem]{Remark}
\theoremstyle{definition}
\theoremstyle{example}
\newtheorem{example}[theorem]{Example}
\begin{document}
\author[S. Eftekhari and M.R. Khorsandi]
{Sina ~Eftekhari  and  Mahdi Reza ~Khorsandi$^*$}

\title[MCD-finite Domains]
{MCD-finite Domains and Ascent of IDF Property in Polynomial Extensions }
\subjclass[2010]{13B25, 13F15, 13F20, 13G05} \keywords{Factorization, IDF, Irreducible, MCD, Polynomial Extensions}
\thanks{$^*$Corresponding author}
\thanks{E-mail addresses: khorsandi@shahroodut.ac.ir and sina.eftexari@gmail.com}
\maketitle

\begin{center}
{\it Faculty of Mathematical Sciences, Shahrood University of Technology,\\
P.O. Box 36199-95161, Shahrood, Iran.}
\end{center}
\vspace{0.4cm}
\begin{abstract}
An integral domain is said to have the IDF property when every non-zero element of it has only a finite number of non-associate irreducible divisors. A counterexample has already been found showing that IDF property does not necessarily ascend in polynomial extensions. 

In this paper, we introduce a new class of integral domains, called MCD-finite domains, and show that for any domain $D$,  $D[X]$ is an IDF domain if and only if $D$ is both IDF and MCD-finite. This result entails all the previously known sufficient conditions for the ascent of the IDF property. 

Our new characterization of polynomial domains with the IDF property enables us to use a different construction and build another counterexample which strengthen the previously known result on this matter.
\end{abstract}
\vspace{0.5cm}

\section{Introduction}
An integral domain is called IDF if every non-zero element of it has only a finite number of non-associate irreducible divisors. These domains were introduced by Grams and Warner in \cite{Grams1975}. They are also one of the generalizations of UFD's that were studied in the seminal paper \cite{anderson1990factorization}. Another important subclass of IDF domains are domains that contain no atoms at all; these domains were named antimatter and studied in \cite{Coykendall1999antimatter}. The class of IDF domains also includes FFD's. An integral domain is an FFD if every non-zero element of it has only a finite number of non-associate divisors. If $D$ is an FFD, then $D[X]$ is also an FFD (see \cite[Proposition~5.3]{anderson1990factorization}). A question first posed in \cite{anderson1990factorization} is whether the IDF property ascends in polynomial extensions. Malcolmson and Okoh in \cite{malcolmson2009polynomial}, answered this question in the negative. They actually proved that:

\begin{theorem}[{{\cite[Theorem~2.5]{malcolmson2009polynomial}}}]
\label{MOTheorem}
Every countable domain can be embedded in a countable antimatter domain $R_{\infty}$ such that $R_{\infty}[X]$ is not an IDF domain.
\end{theorem}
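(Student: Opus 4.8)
\emph{Sketch of a construction.} The plan is to build $R_\infty$ as the union of an ascending chain of countable domains $D=R_0\subseteq R_1\subseteq R_2\subseteq\cdots$, pursuing two tasks at once by dovetailing: at infinitely many stages we enlarge the ring so that a prescribed nonzero non-unit becomes a product of two non-units, and at the remaining stages we enlarge it so as to plant, once and for all, a polynomial $f\in R_\infty[X]$ with infinitely many non-associate irreducible divisors. Since a directed union of countable domains is again a countable domain containing $D$, all the content lies in choosing the two kinds of extension step and in checking that they do not interfere.

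First I would pin down what a bad configuration in $R_\infty[X]$ must look like. Put $K=\qf(R_\infty)$. If a nonzero $f$ has non-associate irreducible divisors $g_1,g_2,\dots$, then each $g_i$ divides $f$ in the UFD $K[X]$, hence is a scalar from $K^{*}$ times one of the finitely many monic divisors of $f$ over $K$; moreover each $g_i$ is nonconstant and \emph{primitive}, for a non-unit constant factor would yield an atom of $R_\infty$ (which is to be antimatter) while a unit constant factor is absorbed into associateness. So infinitely many of the $g_i$ are primitive and become, over $K$, distinct $K^{*}$-multiples of a single monic divisor of $f$; the cleanest realization uses a linear monic $X+r$ with $r\in K\setminus R_\infty$. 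Concretely I would plant in $R_\infty$ an element $t$ together with pairs $(q_i,p_i)$, $i\ge 1$, subject to the relations $p_iq_1=p_1q_i$ (so that $p_i/q_i=p_1/q_1=:r$ for all $i$) and $q_i\mid t$, arranged so that each pair is primitive (no common non-unit divisor), $q_1$ is a non-unit, and the pairs are pairwise non-associate. Writing $t=q_1t'$ and putting $f:=tX+p_1t'=t'(q_1X+p_1)$, the relation $p_iq_1=p_1q_i$ gives $f=(q_iX+p_i)\cdot(t/q_i)$ in $R_\infty[X]$ (using $q_i\mid t$) for every $i$; each $q_iX+p_i$ is irreducible since a primitive linear polynomial over a domain is irreducible, and they are pairwise non-associate because $U(R_\infty[X])=U(R_\infty)$ and the underlying pairs are. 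Hence $f$ witnesses that $R_\infty[X]$ is not IDF.

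The atom-destroying steps are the routine ones: given a nonzero non-unit $a$ of the current ring $R_n$, I would replace $R_n$ by the countable domain $R_n[U,aU^{-1}]\subseteq K[U,U^{-1}]$ (or by $R_n[Y]/(Y^2-a)$, or a suitable subring of $K$ when $a$ is already a square in $K$), in which $a=U\cdot(aU^{-1})$ is a product of two non-units; enumerating the elements of $\bigcup_n R_n$ as they are created and feeding them into such steps forces $R_\infty$ to be antimatter, provided the new factors stay non-units at all later stages.

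The hard part is this interference of the two processes, and I expect the construction to have to be arranged so that: (i) every extension step keeps the ring a countable domain — one adjoins the new generators subject only to the displayed monomial-type relations, so that the defining ideal is prime; (ii) none of the planting or de-atomizing relations ever forces an element to be a unit, so that $U$, $aU^{-1}$, $q_1$, and the polynomials $q_iX+p_i$ remain non-units, which needs a uniform certificate such as a compatible valuation or grading; and, most delicately, (iii) de-atomizing the various elements — including $q_i$, $p_i$, $t$ themselves — never manufactures a common non-unit divisor of $q_i$ and $p_i$, so that the pairs stay primitive and the $g_i=q_iX+p_i$ are genuinely irreducible in $R_\infty[X]$. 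Verifying that coprimality of the planted pairs survives the entire tower is where the real work is concentrated.
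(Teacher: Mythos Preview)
Your sketch is a plausible outline of a direct construction and you have correctly isolated the delicate point (preserving coprimality of the planted pairs $(q_i,p_i)$ through the atom-destroying tower). The present paper, however, does not argue this way. Theorem~\ref{MOTheorem} is quoted from \cite{malcolmson2009polynomial} and then re-obtained as the special case $S=\emptyset$ of the stronger Theorem~\ref{MOstrong}, along an entirely different axis. First, Theorem~\ref{mainTheorem} shows that $R[X]$ is IDF iff $R$ is both IDF and MCD-finite; hence, to make $R_\infty[X]$ fail IDF with $R_\infty$ antimatter (so trivially IDF), it suffices to make $R_\infty$ \emph{not} MCD-finite. Second, non-MCD-finiteness is manufactured cheaply by passing to a monoid domain $B=D[X;T]$ over a countable reduced torsion-free cancellative monoid $T$ that already fails MCD-finiteness (Example~\ref{monoiddomainex}, Lemma~\ref{monoiddomain}). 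Third, $B$ is made antimatter by Roitman/Rand's iterated construction $R_\infty=\mathcal{T}^\infty(B;\emptyset)$ (Theorem~\ref{RandTheorem}), and Roitman's lemmas certify that each stage is division-preserving, keeps the units fixed, and \emph{preserves MCD sets}, so the infinite family of non-associate MCD's already present in $B$ survives intact to $R_\infty$.

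The payoff of the paper's route is precisely that your ``hard part'' disappears: rather than tracking primitivity of hand-planted linear forms through an ad hoc tower, one invokes the structural fact $\MCD_B(V)=\MCD_{R_\infty}(V)$ once, and the infinitely many irreducible divisors of a polynomial are then guaranteed abstractly by Theorem~\ref{mainTheorem} instead of being exhibited. Your approach is more self-contained (it needs no MCD-finite theory and no monoid rings), but as written it is a program rather than a proof: items (ii) and especially (iii) require a genuine bookkeeping device---a grading, valuation, or Roitman-style inert-extension lemma---and until one is supplied there is no argument that the de-atomizing steps for $p_i$ and $q_i$ do not accidentally create a common non-unit factor.
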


A natural question that follows is that under which additional conditions the IDF property does ascend in polynomial extensions. One such condition is when the domain, in addition to being IDF, is a valuation domain or more generally a GCD domain (see \cite[p.~14]{anderson1990factorization} and \cite[Theorem~1.9]{malcolmson2009polynomial}).
 Another case is when the domain is atomic. In fact, atomic IDF domains are exactly FFD's (see \cite[Theorem~5.1]{anderson1990factorization}). In Theorem \ref{mainTheorem}, we see that the essential property of these domains sufficient for the IDF property to ascend is that any finite set of non-zero elements of such domains has only a finite (possibly zero) number of non-associate maximal common divisors (MCD for short); in this paper, we call any such domain MCD-finite. Actually, Theorem \ref{mainTheorem} shows that being MCD-finite is also a necessary condition for the ascent of IDF property. In Theorem \ref{MOstrong}, we  use a modified version of a technique originally introduced by Roitman in \cite{roitman1993polynomial} to get a stronger version of Theorem \ref{MOTheorem}.  In Section 3, we provide some more results and examples regarding MCD-finite domains. 
 
 In the remainder of this section, we state some terminology that are needed for the rest of the paper.

For a domain $D$, the set of its non-zero elements, units and  non-zero non-units  are denoted by $\nonz{D}$, $\units{D}$ and $\nunits{D}$, respectively. We call two elements $x, y \in D$ associates, and write $x \sim y$, if there exists $u \in \units{D}$ such that $a = ub$. For $x,y \in D$, we say that $x$ divides $y$, and write $x \mid y$, if there exists $z \in D$ such that $y = xz$. Also, we use the notations $a \assoc{D} b$ and $a \aadd{D} b$ to emphasize the underlying domain. Two elements $a, b \in \nonz{D}$ are called incomparable if $a \nmid b$ and $b \nmid a$ (i.e., the principal ideals $\langle a \rangle$ and $\langle b \rangle$ are incomparable). Also, following the notation and terminology of \cite{rand2015multiplicative}, the set of atoms of $D$ is denoted by $\atoms{D}$ and we call a subset $A$ of $D$ unit-closed if for every $u \in \units{D}$ and $a \in A$, we have $ua \in A$. 

A domain $D$ is called atomic if every $x \in \nunits{D}$ can be written as a product of irreducible elements (atoms). If the set of principal ideals of $D$ satisfies the ascending chain condition, then $D$ is called an ACCP domain. We refer the reader to \cite{anderson1990factorization} for more on these and other domains with factorization properties.

Let $D$ be a domain and $S \subseteq \nonz{D}$. The set of all the common divisors of $S$ is denoted by
$\mathrm{CD}_D(S)$ (i.e., $\mathrm{CD}_D(S) = \{x \in \nonz{D} \colon\; x \mid s \; \text{for every} \; s \in S \}$). An element $c$ of $\mathrm{CD}_D(S)$ is called a maximal common divisor of $S$ whenever for every $d \in \mathrm{CD}_D(S)$, if $c \mid d$, then $c \sim d$. The set of all the maximal common divisors
of $S$ is denoted by $\MCD_D(S)$. As in \cite{roitman1993polynomial}, if for every finite subset $T$ of $\nonz{D}$, we have $\MCD_D(T) \neq \emptyset$, then $D$ is called an MCD domain. We say $D$ is MCD-finite if for every finite subset $T$ of $\nonz{D}$, we have $|\MCD_D(T)| < \infty$ (with the possibility $|\MCD_D(T)|=0$).

A commutative (additive) monoid $T$ is called cancellative if for all $a, b, c \in T$, if $a + b = a + c$, then $b = c$. Also, $T$ is  torsion-free if for all $a, b \in T$ and $n \in \mathbb{N}$, if $na = nb$, then $a = b$. Finally, $T$ is reduced if it has no non-trivial units; i.e., if $a + b = 0$ for some $a, b \in S$, then $a = b = 0$. All the concepts defined in the two previous paragraphs can be extended to cancellative monoids in an obvious way. We also recall that the monoid ring $R[X;T]$ is a domain if and only if $T$ is cancellative and torsion-free and $D$ is a domain (see \cite[Theorem~8.1]{gilmer1984commutative}).

We say that an extension of domains $A \subseteq B$ is division-preserving if for all $x, y \in \nonz{A}$,
if $x \aadd{B} y$, then $x \aadd{A} y$. This can also be stated concisely by saying $B \cap K = A$ where $K$ is the field of fractions of $A$ (For some properties of these extensions that are relevant to this paper, see \cite[Remark~2.2]{roitman1993polynomial}).

Finally, we recall some definitions from \cite{herzog2011monomial}. Let $D$ be a domain, $R = D[X_1, \dotsc , X_n]$ and $f \in R$. Any product $X_1^{k_1} \dotsm X_n^{k_n}$ with $k_i \in \mathbb{N} \cup \{0\}$ is called a monomial. If we denote the set of all the monomials of $R$ by $\mathrm{Mon}(R)$, then $f = \sum_{u \in \mathrm{Mon}(R)}a_u u$ for some elements $a_u \in D$. The support of $f$ is defined as the set $\{u \in \mathrm{Mon}(R)\colon\; a_u \neq 0\}$. By a monomial order on $R$, we mean a total order $<$ on the set $\mathrm{Mon}(R)$ such that (1) for every $u \in \mathrm{Mon}(R)$, if $u \neq 1$, then $1 < u$, and (2) if $u, v \in \mathrm{Mon}(R)$ and $u < v$, then $uw < vw$ for all $w \in \mathrm{Mon}(R)$. For example, we can consider the pure lexicographic order (induced by the ordering $X_1 > \dotsb > X_n$), where $X_1^{a_1} \dotsm X_n^{a_n} < X_1^{b_1} \dotsm X_n^{b_n}$ if the leftmost nonzero component of $(a_1-b_1,\dotsc, a_n-b_n)$ is negative (see \cite[Example~2.1.2(c)]{herzog2011monomial}). For a fixed monomial order, the leading coefficient of $f$, denoted by $\lc{f}$, is defined as the coefficient of the largest monomial in the support of $f$.

\section{Main Results}

We begin with the following theorem, which states a necessary and sufficient condition for the ascent of IDF property in polynomial extensions.

\begin{theorem}
\label{mainTheorem}
The following are equivalent for a domain $D$:
\begin{enumerate}
\item
$D$ is IDF and MCD-finite.
\item
$D[X]$ is IDF.
\item
For any set $\mybar{X}$ of indeterminates, $D[\mybar{X}]$ is IDF.
\end{enumerate}
\end{theorem}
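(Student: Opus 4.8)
The plan is to prove the cycle $(3)\Rightarrow(2)\Rightarrow(1)\Rightarrow(3)$. The implication $(3)\Rightarrow(2)$ is trivial, taking $\mybar{X}=\{X\}$. For $(2)\Rightarrow(1)$, I would argue contrapositively in two parts. If $D$ is not IDF, then some $a\in\nonz{D}$ has infinitely many non-associate irreducible divisors in $D$; these remain non-associate irreducible divisors of $a$ in $D[X]$ (a non-unit constant that is irreducible in $D$ stays irreducible in $D[X]$, and associate-in-$D[X]$ for constants means associate-in-$D$ since $\units{D[X]}=\units{D}$), so $D[X]$ is not IDF. If $D$ is IDF but not MCD-finite, pick a finite set $T=\{a_1,\dots,a_n\}\subseteq\nonz{D}$ with infinitely many non-associate maximal common divisors $\{c_i\}_{i\in\mathbb N}$. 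The idea is to cook up a single polynomial whose irreducible divisors encode these $c_i$; the natural candidate is something like $f=a_1+a_2X+\dots+a_nX^{n-1}$ (or a similar linear-type combination), whose "content-like" common divisors of coefficients are exactly $\COMD_D(T)$, and show each $c_i$ yields a non-associate irreducible divisor of $f$ in $D[X]$ — possibly $c_i$ itself if it is irreducible, or an irreducible factor of $f/c_i$-type expressions distinguished by the $c_i$. This encoding step is the first delicate point.

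The heart of the matter is $(1)\Rightarrow(3)$: assuming $D$ is IDF and MCD-finite, show $D[\mybar{X}]$ is IDF for an arbitrary set of indeterminates. Since any element of $D[\mybar{X}]$ and any of its divisors involve only finitely many indeterminates, and since $D[X_1,\dots,X_n]=D[X_1,\dots,X_{n-1}][X_n]$, a routine reduction lets me assume $\mybar{X}=\{X\}$, PROVIDED I can show the class of IDF $\cap$ MCD-finite domains is closed under adjoining one indeterminate — i.e. that $D$ IDF and MCD-finite implies $D[X]$ is also MCD-finite (IDF-ness of $D[X]$ being exactly what we are proving). So the real work splits into: (a) $D$ IDF + MCD-finite $\Rightarrow$ $D[X]$ IDF; and (b) $D$ IDF + MCD-finite $\Rightarrow$ $D[X]$ MCD-finite. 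For (b), given finitely many polynomials in $D[X]$, a maximal common divisor must (up to degree considerations and Gauss-type content arguments) be built from a common divisor of all their coefficients together with a common divisor in $\qf(D)[X]$; using that $\qf(D)[X]$ is a PID hence GCD, and that $D$ is MCD-finite to control the constant part, one bounds the number of non-associate MCDs. The leading-coefficient/monomial-order apparatus set up in the introduction is presumably the tool to make "content" and "primitive part" arguments work over a domain that need not be a GCD domain.

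For step (a), the strategy is: let $f\in\nonz{D[X]}$; I want finitely many non-associate irreducible divisors. Write $f=c\cdot g$ where I try to split off a maximal-content-type factor $c\in D$ and a "primitive" $g$. An irreducible divisor $p$ of $f$ in $D[X]$ is either (i) a constant, i.e. $p\in\atoms{D}$ dividing the coefficients of $f$ — there are finitely many such up to associates because $D$ is IDF (each atom dividing all coefficients divides any single nonzero coefficient); or (ii) of positive degree. For case (ii), $p$ divides $f$ in $D[X]$, hence in $\qf(D)[X]$, and there $f$ has only finitely many monic irreducible factors (PID); each such factor $q\in\qf(D)[X]$ corresponds to at most finitely many associate-classes of positive-degree irreducibles of $D[X]$ lying over it, and here is exactly where MCD-finiteness of $D$ enters — the possible "leading-coefficient adjustments"/content choices that turn $q$ into an element of $D[X]$ dividing $f$ are governed by maximal common divisors of a finite set of elements of $D$ extracted from $f$ and $q$, of which there are only finitely many. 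I expect the main obstacle to be precisely this last bookkeeping in case (ii): pinning down, without a GCD hypothesis on $D$, the correct finite set of elements of $D$ whose MCDs parametrize the positive-degree irreducible divisors of $f$ lying over a fixed $q\in\qf(D)[X]$, and proving that two positive-degree irreducible divisors over the same $q$ that induce the same MCD-class are associate in $D[X]$. The leading-coefficient map and Gauss's-lemma-style arguments adapted to the non-GCD setting should furnish this, but it is the technically heaviest part of the proof.
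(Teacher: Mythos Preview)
Your overall cycle $(3)\Rightarrow(2)\Rightarrow(1)\Rightarrow(3)$ matches the paper's, and your step (a) for one variable is essentially the one-variable specialization of the paper's argument. There are, however, two substantive divergences.

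\medskip

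\textbf{The gap in $(2)\Rightarrow(1)$.} You correctly form $f=a_0+a_1X+\dotsb+a_nX^n$, but the step ``each $c_i$ yields a non-associate irreducible divisor of $f$'' is where the whole argument lives, and your sketch (``possibly $c_i$ itself if it is irreducible, or an irreducible factor of $f/c_i$'') does not work as stated: distinct $c_i$'s may well share all of their irreducible factors, and there is no evident map $c_i\mapsto(\text{single irreducible divisor of }f)$ that is injective on associate-classes. The paper argues differently. Because $c_i\in\MCD_D(a_0,\dots,a_n)$, the polynomial $f_i\coloneqq f/c_i$ has coefficients with $\GCD$ equal to $1$, so no non-unit constant can occur in any factorization of $f_i$; hence any maximal-length factorization of $f_i$ into non-units (the length is bounded by $\deg f$) is an \emph{atomic} factorization, with every factor of positive degree. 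Now assume for contradiction that $D[X]$ is IDF: then all these atomic factors lie among finitely many atoms $g_1,\dots,g_m$ dividing $f$, each of positive degree, so the exponent vectors $(t_{1,i},\dots,t_{m,i})$ with $f_i\sim g_1^{t_{1,i}}\dotsm g_m^{t_{m,i}}$ range over a finite set; pigeonhole gives $f_k\sim f_\ell$ and hence $c_k\sim c_\ell$, a contradiction. The key idea you are missing is this pigeonhole on \emph{complete} atomic factorizations of the $f_i$, not an assignment of one irreducible to each $c_i$.

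\medskip

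\textbf{The structural difference in $(1)\Rightarrow(3)$.} The paper does \emph{not} induct on the number of indeterminates and therefore never needs your step~(b). After the routine reduction to finitely many variables, it works directly in $R=D[X_1,\dots,X_n]$ with a monomial order: if some $f\in R$ has infinitely many non-associate irreducible divisors $\{f_i\}$, pass to $K[X_1,\dots,X_n]$ (a UFD) to extract an infinite subfamily associate over $K$; these share a common support of size $\ge 2$, hence each has coefficient-$\GCD$ equal to $1$. Writing $f_j=(\lc{f_j}/\lc{f_i})f_i$ and multiplying through by $\lc{f}$ (noting $\lc{f_i}\mid\lc{f}$), one obtains infinitely many pairwise non-associate elements $\lc{f}\cdot\lc{f_t}/\lc{f_i}\in D$, and these are MCDs of the finite set of coefficients of $\lc{f}f_t$ --- so $D$ is not MCD-finite. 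This single leading-coefficient computation handles all $n$ at once. Your inductive route is not wrong, but it genuinely requires proving independently that $D$ IDF and MCD-finite forces $D[X]$ to be MCD-finite; that statement is true (it follows \emph{a posteriori} from the theorem, applying $(2)\Rightarrow(1)$ to $D[X]$ once $D[X,Y]$ is known to be IDF), yet establishing it directly is no lighter than the paper's multivariable argument. Replacing ``induction $+$ step~(b)'' by the monomial-order argument in $n$ variables is the cleaner path.
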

\begin{proof}
$(1 \Longrightarrow 3)$
If $f \in D[\mybar{X}]$, then there exist indeterminates $X_1, \dotsc, X_n \in \mybar{X}$ such that $f \in D[X_1, \dotsc, X_n]$. Now, if $g \in \atoms{D[\mybar{X}]}$ and $g \aadd{D[\mybar{X}]} f$, then it is easy to see that $g \in \atoms{D[X_1, \dotsc, X_n]}$ and $g \aadd{D[X_1, \dotsc, X_n]} f$. Moreover, for any $g_1, g_2 \in D[X_1, \dotsc, X_n]$, $g_1 \assoc{D[\mybar{X}]} g_2$ if and only if $g_1 \assoc{D[X_1, \dotsc, X_n]} g_2$. Therefore, we only need to prove this implication for the case where $\mybar{X}$ is finite. So let $R = D[X_1, \dotsc, X_n]$ and suppose that $D$ is an IDF domain such that $R$ is not IDF. We show that $D$ is not MCD-finite.
 
Let $f \in \nunits{R}$ be such that there exists an infinite set $\{f_i\}_{i \in I}$ of non-associate irreducible divisors of $f$. Let $K$ be the field of fractions of $D$ and $R' = K[X_1, \dotsc, X_n]$. Since $R'$ is a UFD, there exists an infinite subset $J$ of $I$ such that the elements of $\{f_i\}_{i \in J}$ are associate in $R'$. Hence, there exists a set $S$ of monomials such that for every $i \in J$ the support of $f_i$ is $S$. There are two types of irreducible elements in $R$ with a support of size 1. First type, up to associates, consists of the elements $X_1, \dotsc, X_n$. Obviously, $f$ cannot have an infinite number of non-associate irreducible divisors of this type. The second type consists of every $a \in \atoms{D}$. If an element $a \in \atoms{D}$ divides $f$, then $a$ divides every single coefficient of $f$ in $D$, and since $D$ is IDF, $f$ cannot have an infinite number of non-associate irreducible divisors of the second type. Therefore, $2 \leq |S|$, and so since for every $i \in J$, $f_i \in \atoms{R}$, the GCD of the coefficients of each $f_i$ is equal to $1$.

Now, we fix a monomial order. For all $i, j \in J$, there exists an element $u_{i, j} \in \nonz{K}$ such that $f_i u_{i, j} = f_j$, and so $u_{i,j} = \frac{\lc{f_j}}{\lc{f_i}}$. Now, fix an element $t$ of $J$. Then for every $i \in J$,  $f_t = \left( \frac{\lc{f_t}}{\lc{f_i}} \right) f_i$, and so 
\[ \lc{f} f_t = \lc{f} \frac{\lc{f_t}}{\lc{f_i}}f_i . \]
Using the definition of monomial order, it is not difficult to see that $\lc{f_i} \aadd{D} \lc{f}$. Hence, $\lc{f} \left( \frac{\lc{f_t}}{\lc{f_i}} \right) \in \nonz{D}$. Also, if $i \neq j$, then 
\[ \lc{f}\frac{\lc{f_t}}{\lc{f_i}} \not{\assoc{D}} \lc{f}\frac{\lc{f_t}}{\lc{f_j}}  ; \] 
otherwise $\lc{f_i} \assoc{D} \lc{f_j}$, so $u_{i,j} \in \units{D}$, and so $f_i \assoc{R} f_j$, which is a contradiction. Since for every $i \in J$, the GCD of the coefficients of $f_i$ is equal to $1$, the set  
\[ \left\{\lc{f}\frac{\lc{f_t}}{\lc{f_i}}\right\}_{i \in J} \]
is an infinite set of non-associate maximal common divisors of the coefficients of $\lc{f}f_t$, and hence $D$ is not MCD-finite.

$(3 \Longrightarrow 2)$ This is obvious.

$(2 \Longrightarrow 1)$
If $D$ is not IDF, then obviously neither is $D[X]$.

Now, let both of the domains $D$ and $D[X]$ be IDF domains, but there exist $n \in \mathbb{N}$ and elements
$a_0, a_1, \dots, a_n$ in $\nonz{D}$ with infinitely many non-associate maximal common divisors $\{c_i\}_{i \in I}$.

Let $f = a_0 + a_1X + \dotsb + a_nX^n$. For every $i \in I$, there exists an $f_i \in \nunits{D[X]}$
such that $f = c_if_i$. Since the GCD of the coefficient of each $f_i$ is 1, no element of $D$ can appear in a factorization of $f_i$ into non-units. Hence, there exists a factorization of $f_i$ with maximum number (bounded by $\deg(f_i)$) of non-unit factors; say $f_i = h_1 \dotsm h_k$. It follows that each $h_i$ is necessarily an atom. Therefore, each $f_i$ has an atomic factorization in $D[X]$. But every irreducible divisor of each of the elements $f_i$ is an irreducible divisor of $f$ too. Therefore, since $D[X]$ is an IDF domain, there exists a finite set $\{g_1, \dots, g_m \}$ of non-associate irreducible divisors of $f$ such that every $f_i$, up to associates, is equal to a product of the elements $g_i$. Hence, for every $i \in I$, there exist $t_{1,i}, \dotsc, t_{m,i} \in \mathbb{N} \cup \{0\}$ such that 
\[f_i \assoc{D[X]} g_1^{t_{1,i}} \dotsm g_m^{t_{m,i}} .\]
Note that for $1 \leq j \leq m$, we have $1 \leq \deg(g_j)$. Hence, for $1 \leq j \leq m$, the set $\{t_{j,i}\}_{i \in I}$ is finite, and so there exist $k, \ell \in \mathbb{N}$ such that $k \neq \ell$ and 
\[ g_1^{t_{1,k}} \dotsm g_m^{t_{m,k}} = g_1^{t_{1,\ell}} \dotsm g_m^{t_{m,\ell}} , \]
and so $f_k \assoc{D[X]} f_\ell$. Hence, $c_k \assoc{D[X]} c_\ell$, and so $c_k \assoc{D} c_\ell$, which is a contradiction.
\end{proof}

This theorem entails the previously known results \cite[Theorem~1.9]{malcolmson2009polynomial} and \cite[Proposition~5.3]{anderson1990factorization} regarding sufficient conditions for the ascent of IDF property. Moreover, in the introduction of \cite{malcolmson2009polynomial}, it is mentioned and attributed to Muhammad Zafrullah that the polynomial extension of a domain that is both IDF and pre-Schreier is IDF. Hence, by Theorem \ref{mainTheorem}, being pre-Schreier must imply being MCD-finite. Indeed, we recall that a domain $D$ is pre-Schreier if and only if the poset of its principal ideals satisfies Riesz interpolation property, i.e., for every pair of finite subsets $A$ and $B$ of principal ideals of $D$, if $A \leq B$ (i.e., for every $\langle a \rangle \in A$ and $\langle b \rangle \in B$, $\langle a \rangle \subseteq \langle b \rangle$), then there exists a principal ideal $\langle x \rangle$ such that $A \leq \langle x \rangle \leq B$ (see \cite[Theorem~1.1]{Zafrullah1087}). Hence, every finite set of non-zero elements of a pre-Schreier domain, up to associates, has at most one MCD. So the class of pre-Schreier domains is a subclass of MCD-finite domains as expected.

Before going on, it is worthwhile to compare the notions of IDF and MCD-finite domains ideal-theoretically. A domain $D$ is IDF if and only if for any non-zero principal ideal $I$, there only exists a finite number of  ideals containing $I$ that are maximal with respect to being principal (see \cite[p.~12]{anderson1990factorization}). On the other hand, a domain $D$ is MCD-finite if and only if for any finitely generated ideal $I$, there only exists a finite number of  ideals containing $I$ that are minimal with respect to being principal.

Now, we are going to find a way to embed an arbitrary domain $D$ in a domain $R$ such that $R$ is not MCD-finite. The task of finding monoids that are not MCD-finite is more straightforward than finding domains that are not MCD-finite. Next, we give two examples of such monoids.

\begin{example}
\label{monoiddomainex}
\begin{enumerate}
\item
The additive monoid $C = \{x \in \mathbb{Q}\colon\;  1 \leq x\} \cup \{0\}$ is not MCD-finite. In fact, let $A$ be pairwise incomparable subset of $C$ (i.e., there do not exist $x, y \in A$ such that $1 \leq |x- y|$), where $2 \leq |A|$. Then if there exists an element $y$ in $A$ such that $y < 2$, then $\mathrm{MCD}(A) = \{0\}$. Another possibility is when all the elements of $A$ are strictly greater than $2$. In this case, $A$ has an infinite number of MCD's. Finally, if $2$ is the minimum element of $A$, then $\mathrm{MCD}(A) = \{1\}$.
\item
Let $S$ be a cancellative, torsion-free and reduced monoid and suppose that there exist $s,t \in S$ such that $\MCD(s, t)$ contains at least two (non-associate) elements $b$ and $d$ (An elementary example of such monoid is the additive monoid $(\mathbb{N} \cup \{0\}) \setminus \{1\}$. Note that $2$ and $3$ are both MCD's of $5$ and $6$).

Set $T \coloneqq \prod_{i \in \mathbb{N}}S$. Let $s'$ and $t'$ be the elements of $T$ with all the components equal to $s$ and $t$, respectively. Let $c_i$ be the element of $T$ with $b$ in its $i$th place, and $d$ in all the other places.
Then $\{c_i\}_{i \in \mathbb{N}}$ is a set of non-associate MCD of $s'$ and $t'$ and hence, $T$ is not MCD-finite. Moreover, it is easy to see that $T$ is reduced, cancellative and torsion-free.
\end{enumerate}
\end{example}

Now, using monoid domains, we can easily embed any domain into a domain which is not MCD-finite (another way for doing this is given in Example \ref{ReesEx}). Explicitly:

\begin{lemma}
\label{monoiddomain}
Let $D$ be a domain and let $S$ be a torsion-free, cancellative monoid that is not MCD-finite. Let $R = D[X; S]$.
Then $R$ is not MCD-finite, the extension $D \subseteq R$ is division-preserving and $\atoms{D} \subseteq \atoms{R}$. Also, in the special case where $S$ is reduced, we additionally have $\units{D} = \units{R}$.
\end{lemma}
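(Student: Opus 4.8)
The plan is to reduce all four assertions to a single divisibility criterion for monomials in $R = D[X;S]$, together with two standard structural facts about monoid domains. First I would record these facts, each obtained from the embedding of the cancellative torsion-free monoid $S$ into its (necessarily orderable) group of quotients $G$ and the resulting multiplicativity of leading and trailing terms of elements of $R$ (alternatively, one can invoke \cite{gilmer1984commutative}):
(i) $\units{R} = \{\,uX^v : u \in \units{D},\ v \in \units{S}\,\}$;
(ii) every divisor in $R$ of a monomial $bX^t$ with $b \in \nonz{D}$ is again a monomial.
From (i) the last assertion of the lemma is immediate: when $S$ is reduced, $\units{S}=\{0\}$, so $\units{R}=\units{D}$; and for general $S$, (i) still shows that an element of $D$ is a unit of $R$ precisely when it is a unit of $D$. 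Next I would prove, using only cancellativity of $S$ and the fact that $D$ is a domain, the criterion: for $a,b \in \nonz{D}$ and $s,t \in S$,
\[ aX^{s} \aadd{R} bX^{t} \iff a \aadd{D} b \text{ and } s \aadd{S} t. \]
The nontrivial direction follows because if $bX^{t} = aX^{s} g$ with $g = \sum_{k} c_{k}X^{u_{k}}$, then $aX^{s}g = \sum_{k} ac_{k}X^{s+u_{k}}$ has pairwise distinct exponents (cancellation) and nonzero coefficients ($D$ a domain), so $g$ must be a single monomial $cX^{u}$ with $ac=b$ and $s+u=t$.

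With these in hand, the remaining three assertions fall out quickly.
\emph{Division-preserving:} if $x,y \in \nonz{D}$ and $x \aadd{R} y$, apply the criterion with $s=t=0$ to obtain $x \aadd{D} y$.
\emph{Atoms ascend:} let $a \in \atoms{D}$; then $a \notin \units{R}$ by (i). If $a = gh$ in $R$, then $g,h$ are monomials by (ii), say $g=bX^{s}$, $h=cX^{u}$, with $bc=a$ and $s+u=0$ (so $s,u \in \units{S}$); since $a$ is an atom of $D$, one of $b,c$ — say $b$ — lies in $\units{D}$, whence $g=bX^{s}\in\units{R}$. Thus $a \in \atoms{R}$.
\emph{Not MCD-finite:} pick a finite set $T \subseteq S$ with infinitely many pairwise non-associate maximal common divisors $\{s_{i}\}_{i\in\mathbb{N}}$ in $S$, and consider the finite set $\{X^{t} : t \in T\} \subseteq \nonz{R}$. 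By the criterion each $X^{s_{i}}$ is a common divisor of this set in $R$. Conversely, if $g$ is any common divisor, then $g$ is a monomial $aX^{s}$ by (ii), and the criterion forces $a \in \units{D}$ and $s \in \COMD_{S}(T)$; if moreover $X^{s_{i}} \aadd{R} g$, the criterion gives $s_{i} \aadd{S} s$, so $s \assoc{S} s_{i}$ by maximality of $s_{i}$, and therefore $g = aX^{s} \assoc{R} X^{s_{i}}$. Hence each $X^{s_{i}}$ is a maximal common divisor of $\{X^{t}:t\in T\}$ in $R$. Finally $X^{s_{i}} \assoc{R} X^{s_{j}}$ would force, via (i), $s_{i} \assoc{S} s_{j}$; so the $X^{s_{i}}$ stay pairwise non-associate in $R$, and $|\MCD_{R}(\{X^{t}:t\in T\})| = \infty$.

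The only input that is not a routine manipulation is the structural pair (i)--(ii); I expect verifying these (via the ordered-group embedding, or by citation) to be the main — though still standard — point, after which everything is bookkeeping with the divisibility criterion. One should also dispose of the harmless edge cases (e.g.\ if $0 \in T$, then $T$ has at most one maximal-common-divisor class, so the witnessing $T$ above automatically avoids $0$), but these present no difficulty.
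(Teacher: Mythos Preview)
Your proof is correct and follows the same route as the paper. The paper's proof is a two-sentence sketch: it records only your fact~(ii) (any divisor of a monomial in $R=D[X;S]$ is a monomial, citing \cite[Theorem~11.1]{gilmer1984commutative}) and asserts that all four conclusions follow, pointing also to \cite[Lemma~3.1]{Coykendall2011embedding}; your write-up is precisely the unpacking of that sketch, making explicit the unit description~(i), the monomial divisibility criterion, and the verification that the non-associate MCD's $s_i$ in $S$ yield non-associate MCD's $X^{s_i}$ in $R$.
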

\begin{proof}
The essential observation is that if $S$ is torsion-free and cancellative, then any divisor of a monomial of $R = D[X; S]$ is itself a monomial (see \cite[Theorem~11.1]{gilmer1984commutative}). All the parts can be deduced from this (see also \cite[Lemma~3.1]{Coykendall2011embedding}).
\end{proof}

Finally, we are going to embed a domain which is not MCD-finite into a domain that is IDF and has the same unit elements in such a way that the MCD's of elements of the original domain is preserved in the new domain (thus ensuring that the new domain is not MCD-finite). For doing this, we use a modified version of a powerful construction originally used by Roitman in \cite{roitman1993polynomial} (this technique was used to construct an atomic domain such that its polynomial extension is not atomic). This technique has also been used in \cite{Coykendall2011embedding}, \cite{Coykendall2004ap} and \cite{rand2015multiplicative}.

Let $D$ be a domain and $S \subseteq \nonz{D}$. Set
\[ \mathcal{L}(D;S) \coloneqq D[ \{X_s, \frac{s}{X_s}\colon\; s \in S \} ] .\]

Rand in \cite{rand2015multiplicative} proved the following result which we restate by adding the construction used in the proof.

\begin{theorem}[{{\cite[Theorem~2.7]{rand2015multiplicative}}}]
\label{RandTheorem}
Let $D$ be a domain and let $S$ be a subset of $\atoms{D}$ that is unit-closed. For every $n \in \mathbb{N} \cup \{0\}$, we define a domain $\mathcal{T}_n(D)$ inductively as follows:
\begin{enumerate}
\item
Set $\mathcal{T}_0(D;S) \coloneqq D$.
\item
For every $n \in \mathbb{N}$, set
$\mathcal{T}_n(D;S) \coloneqq \mathcal{L}(\mathcal{T}_{n-1}(D); \atoms{\mathcal{T}_{n-1}(D)} \setminus S)$.
\end{enumerate}
Set $\mathcal{T}^{\infty}(D;S)	 \coloneqq \bigcup_{i \in \mathbb{N}} \mathcal{T}_i (D;S)$. Then $\units{\mathcal{T}^{\infty}(D;S)} = \units{D}$ and $\atoms{\mathcal{T}^{\infty}(D;S)} = S$.
\end{theorem}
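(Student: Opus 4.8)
The plan is to isolate a single-step lemma about the construction $\mathcal{L}(D;T)$ and then iterate it through a directed-limit argument. Call a subset $T$ of a domain $D$ \emph{admissible} if $T\subseteq\atoms{D}$ and $T$ is unit-closed; note that for any domain $A$ and any unit-closed set $S$ the set $\atoms{A}\setminus S$ is admissible in $A$ (it consists of atoms, and unit-closedness of $\atoms{A}$ and of $S$ forces unit-closedness of the difference). The lemma I would prove is: if $T$ is admissible in $D$ and $R=\mathcal{L}(D;T)$, then (a) $R$ is a domain with $\units{R}=\units{D}$; (b) every $t\in T$ is reducible in $R$, namely $t=X_t\cdot\frac{t}{X_t}$ with both factors non-units; and (c) $\atoms{D}\setminus T\subseteq\atoms{R}$.

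To prove the lemma I would realize $R$ concretely inside the Laurent polynomial ring $L=D[X_t^{\pm1}\colon t\in T]$, which is a domain, so $R$ is too. Grade $L$ by the free abelian group $\Gamma=\mathbb{Z}^{(T)}$ of finitely supported functions $T\to\mathbb{Z}$, with $X^v\coloneqq\prod_t X_t^{v_t}$ homogeneous of degree $v$. A routine computation identifies
\[ R \;=\; \bigoplus_{v\in\Gamma}\left(\prod_{t\colon v_t<0}t^{-v_t}\right)D\cdot X^v , \]
so $R$ is $\Gamma$-graded with degree-$v$ component the principal $D$-submodule $\left(\prod_{t\colon v_t<0}t^{-v_t}\right)D\cdot X^v$ (which is just $D\cdot X^v$ when all $v_t\geq0$). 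Since $\Gamma$ is torsion-free it carries a total order compatible with addition; because $L$ is a domain the extreme degrees of a product do not cancel, so a factorization $PQ=a$ of a homogeneous element forces $P=pX^v$ and $Q=qX^{-v}$ homogeneous with $p,q\in D$ and $pq=a$. Taking $a=1$ gives $p,q\in\units{D}$; since $p\in\left(\prod_{t\colon v_t<0}t^{-v_t}\right)D$, $q\in\left(\prod_{t\colon v_t>0}t^{v_t}\right)D$, and no $t\in T$ is a unit, we get $v=0$, which proves (a). Part (b) is then immediate, as $X_t$ and $\frac{t}{X_t}$ have nonzero degrees $\pm e_t$. For (c), let $a\in\atoms{D}\setminus T$; from $pq=a$ one factor, say $p$, is a unit of $D$, forcing all $v_t\geq0$ as before, and then $q\sim a$ with $q\in\left(\prod_{t\colon v_t>0}t^{v_t}\right)D$, so a component $v_t>0$ would make the atom $t\in T$ divide the atom $a$, hence $t\sim a$ and $a\in T$ by unit-closedness, a contradiction; thus $v=0$ and $P\in\units{R}$, i.e. $a\in\atoms{R}$. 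I expect the verification of the displayed presentation of $R$ and the bookkeeping with the coefficient ideals to be the only substantive step; the order-theoretic leading-term argument is standard for graded domains over torsion-free groups.

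Given the lemma, I would finish by induction on $n$: $\atoms{\mathcal{T}_{n-1}(D;S)}\setminus S$ is admissible in $\mathcal{T}_{n-1}(D;S)$, $\units{\mathcal{T}_n(D;S)}=\units{D}$, $S\subseteq\atoms{\mathcal{T}_n(D;S)}$, and every atom of $\mathcal{T}_{n-1}(D;S)$ not in $S$ becomes reducible in $\mathcal{T}_n(D;S)$ — each point a direct application of (a)--(c) with $D\coloneqq\mathcal{T}_{n-1}(D;S)$. Then $\mathcal{T}^{\infty}(D;S)=\bigcup_n\mathcal{T}_n(D;S)$ is a directed union of domains, so any unit of it and any factorization in it already occur at a finite stage; with $\units{\mathcal{T}_n(D;S)}=\units{D}$ this gives $\units{\mathcal{T}^{\infty}(D;S)}=\units{D}$. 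If $s\in S$ then $s$ is a non-unit and a proper factorization of it would occur in some $\mathcal{T}_n(D;S)$, contradicting $s\in\atoms{\mathcal{T}_n(D;S)}$; hence $S\subseteq\atoms{\mathcal{T}^{\infty}(D;S)}$. Conversely, if $a\in\atoms{\mathcal{T}^{\infty}(D;S)}$, pick the least $n$ with $a\in\mathcal{T}_n(D;S)$; a factorization there would lift, so $a\in\atoms{\mathcal{T}_n(D;S)}$, and were $a\notin S$ it would be reducible in $\mathcal{T}_{n+1}(D;S)\subseteq\mathcal{T}^{\infty}(D;S)$, a contradiction. Therefore $\atoms{\mathcal{T}^{\infty}(D;S)}=S$.
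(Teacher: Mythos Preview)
The paper does not supply its own proof of this theorem: it is quoted verbatim from \cite{rand2015multiplicative} (Rand's Theorem~2.7), with only the construction added to the statement. There is therefore no in-paper argument to compare your proposal against.

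That said, your proof is correct and well-organized. The single-step lemma you isolate---units are preserved, elements of $T$ become reducible, and atoms outside $T$ stay atomic---is exactly the content of Roitman's \cite[Lemma~3.2]{roitman1993polynomial}, which the present paper invokes repeatedly elsewhere. Your $\Gamma$-graded presentation of $\mathcal{L}(D;T)$ inside the Laurent ring is a clean way to prove it from scratch; Roitman's original argument proceeds by realizing $\mathcal{L}(D;T)$ as an extended Rees-type ring and analyzing degrees, which amounts to the same homogeneity bookkeeping you carry out. The directed-limit passage is routine once the single-step lemma is in hand, and your treatment of it is fine; the only implicit point worth making explicit is that non-units of $\mathcal{T}_n(D;S)$ remain non-units in $\mathcal{T}^{\infty}(D;S)$ because $\units{\mathcal{T}_n(D;S)}=\units{D}$ for every $n$, which you have already established.
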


Now, we are ready to prove the stronger version of Theorem \ref{MOTheorem}.

\begin{theorem}
\label{MOstrong}
Let $D$ be a domain and let $S$ be a unit-closed subset of $\atoms{D}$. Then there exists a domain $R$ containing $D$ such that $R$ is not MCD-finite, $\atoms{R} = S$ and $D \subseteq R$ is division-preserving. 

In particular, if S does not contain an infinite set of non-associate elements, then R is an IDF domain.
\end{theorem}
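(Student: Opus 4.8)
The plan is to obtain $R$ in two stages, using Lemma~\ref{monoiddomain} and Theorem~\ref{RandTheorem}. First, apply Lemma~\ref{monoiddomain} to $D$ and a reduced, torsion-free, cancellative monoid $T$ that is not MCD-finite; by Example~\ref{monoiddomainex}(2) we may take $T$ so that there are $p,q,\{c_m\}_{m\in\mathbb{N}}\in\nonz{T}$ with $\{c_m\}$ an infinite set of pairwise distinct maximal common divisors of $\{p,q\}$ in $T$. Put $D':=D[X;T]$. Then $D'$ is not MCD-finite, $D\subseteq D'$ is division-preserving, $\atoms{D}\subseteq\atoms{D'}$, and (as $T$ is reduced) $\units{D'}=\units{D}$; moreover, since every divisor of a monomial of $D'$ is a monomial, the monomials $f:=X^{p}$ and $g:=X^{q}$ have the infinite set $\{X^{c_m}\}_{m\in\mathbb{N}}$ of pairwise non-associate (monomial) maximal common divisors in $D'$. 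Second, note that $S\subseteq\atoms{D}\subseteq\atoms{D'}$ and that $S$ is unit-closed in $D'$ because $\units{D'}=\units{D}$; so Theorem~\ref{RandTheorem} applies to $(D',S)$, and we set $R:=\mathcal{T}^{\infty}(D';S)$, a domain containing $D$, with $\atoms{R}=S$ and $\units{R}=\units{D'}=\units{D}$. The last assertion of the theorem is then immediate: if $S$ contains no infinite set of non-associate elements, then $R$ has only finitely many non-associate atoms, so every non-zero element of $R$ has only finitely many non-associate irreducible divisors, i.e.\ $R$ is IDF.

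It remains to check that $D\subseteq R$ is division-preserving and that $R$ is not MCD-finite. For the former, since $D\subseteq D'$ is division-preserving it suffices to prove $R\cap\qf(D')=D'$. Writing $\mathcal{T}_n:=\mathcal{T}_n(D';S)$ and $A_n:=\atoms{\mathcal{T}_{n-1}}\setminus S$, the step $\mathcal{T}_{n-1}\subseteq\mathcal{T}_n=\mathcal{L}(\mathcal{T}_{n-1};A_n)$ embeds into the Laurent polynomial ring $\mathcal{T}_{n-1}[\{X_a^{\pm1}:a\in A_n\}]$, and one has $\mathcal{T}_{n-1}[\{X_a^{\pm1}\}]\cap\qf(\mathcal{T}_{n-1})=\mathcal{T}_{n-1}$ by a routine support argument; hence $\mathcal{T}_n\cap\qf(\mathcal{T}_{n-1})=\mathcal{T}_{n-1}$. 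Any element of $R\cap\qf(D')$ lies in some $\mathcal{T}_n$, and descending through $\mathcal{T}_n\supseteq\mathcal{T}_{n-1}\supseteq\dotsb\supseteq\mathcal{T}_0=D'$ puts it in $D'$; thus $D'\subseteq R$, and therefore $D\subseteq R$, is division-preserving.

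For the main point, that $R$ is not MCD-finite, I would show that each $X^{c_m}$ is still a maximal common divisor of $\{f,g\}$ in $R$; since $\units{R}=\units{D}$, the $X^{c_m}$ remain pairwise non-associate in $R$, so $\{f,g\}$ witnesses the failure of MCD-finiteness. The engine is a structural fact about the $\mathcal{L}$-construction: if $d\in\mathcal{L}(B;A)$ divides some $y\in B$, then a degree count in each variable $X_a$ inside $B[\{X_a^{\pm1}\}]$ forces $d=c\,X^{\kappa}$ for some $c\in B$ with $c\aadd{B}y$ and some Laurent monomial $X^{\kappa}$; membership of $d$ in $\mathcal{L}(B;A)$ then writes $d$, up to a unit of $B$, as $c'\prod_a(a/X_a)^{\kappa_a^-}\prod_a X_a^{\kappa_a^+}$ with $c'\in B$. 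From this, taking $y=1$ recovers $\units{\mathcal{T}_n}=\units{\mathcal{T}_{n-1}}$; and, more to the point, if $y_1,\dotsc,y_k\in D'$ satisfy $\COMD_{D'}(\{y_i\})=\units{D}$ then, by induction on $n$, $\COMD_{\mathcal{T}_n}(\{y_i\})=\units{\mathcal{T}_n}$, hence $\COMD_{R}(\{y_i\})=\units{D}$: a common divisor of the $y_i$ in $\mathcal{T}_n$ is, by the structural fact and the inductive hypothesis in $\mathcal{T}_{n-1}$, a unit times a product of the level-$n$ variables $X_a$, and were this product a non-unit, some level-$n$ atom $a$ would divide every $y_i$ in $\mathcal{T}_{n-1}$, contradicting $\COMD_{\mathcal{T}_{n-1}}(\{y_i\})=\units{\mathcal{T}_{n-1}}$. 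Now, since $c_m$ is a maximal common divisor of $\{p,q\}$ in $T$, the quotient monomials $X^{p-c_m}$ and $X^{q-c_m}$ have $\COMD_{D'}(X^{p-c_m},X^{q-c_m})=\units{D}$ --- a non-unit common divisor would be a monomial $X^{w}$ with $0\neq w\in\COMD_T(p-c_m,q-c_m)$, and then $c_m+w$ would be a common divisor of $\{p,q\}$ in $T$ properly above $c_m$ --- so $\COMD_{R}(X^{p-c_m},X^{q-c_m})=\units{D}$ by the previous sentence. Finally, if $d\in R$ is any common divisor of $\{f,g\}$ with $X^{c_m}\aadd{R}d$, write $d=X^{c_m}h$; cancelling $X^{c_m}$ in $f=X^{c_m}X^{p-c_m}=d\cdot(f/d)$ and $g=X^{c_m}X^{q-c_m}=d\cdot(g/d)$ gives $h\in\COMD_{R}(X^{p-c_m},X^{q-c_m})=\units{D}$, whence $d\assoc{R}X^{c_m}$. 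So every $X^{c_m}$ lies in $\MCD_{R}(\{f,g\})$, and $R$ is not MCD-finite.

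The main obstacle is the structural fact about $\mathcal{L}(B;A)$ and, above all, propagating ``$\COMD$ of the quotient monomials consists only of units'' up the Roitman--Rand tower: at each level one adjoins new variables for all current atoms outside $S$, so one must track carefully which elements have been turned into products and verify that no new non-unit common divisor of $X^{p-c_m}$ and $X^{q-c_m}$ appears. This bookkeeping, rather than any single hard step, is where the care lies; the remaining ingredients are just Lemma~\ref{monoiddomain}, Theorem~\ref{RandTheorem}, and the division-preserving computation above.
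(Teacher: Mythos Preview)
Your proof is correct and follows essentially the same two-stage construction as the paper: pass from $D$ to $D'=D[X;T]$ via Lemma~\ref{monoiddomain}, then to $R=\mathcal{T}^{\infty}(D';S)$ via Theorem~\ref{RandTheorem}. The only difference is one of packaging: the paper dispatches the division-preserving step and the preservation of $\MCD$ up the tower by citing Roitman's lemmas (specifically \cite[Lemma~3.2(1),(6)]{roitman1993polynomial} and \cite[Lemma~3.1]{roitman1993polynomial}), whereas you reprove the needed special cases directly --- your ``structural fact'' about divisors of elements of $B$ inside $\mathcal{L}(B;A)$ and the inductive propagation of $\COMD_{\mathcal{T}_n}(\{y_i\})=\units{\mathcal{T}_n}$ are exactly the content of those lemmas restricted to what is used here. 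Your version is therefore more self-contained but longer; the paper's is shorter but relies on external references.
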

\begin{proof}
Let $T$ be a cancellative, torsion-free and reduced monoid which is not MCD-finite (e.g., Example \ref{monoiddomainex}) and let $B = D[X, T]$. Then by Lemma \ref{monoiddomain}, $S \subseteq \atoms{B}$, $S$ is unit-closed  in $B$, $D \subseteq B$ is division-preserving and $B$ is not MCD-finite.

Now, if we set $R \coloneqq \mathcal{T}^{\infty}(B;S)$, then by Theorem \ref{RandTheorem}, $\atoms{R} = S$. Also, by using induction and by \cite[Lemma~3.2(1)]{roitman1993polynomial}, for every $n \in \mathbb{N}$, the extension $B \subseteq \mathcal{T}_n(B;S)$ is division-preserving, and so the extensions $B \subseteq R$ and hence $D \subseteq R$ are also division-preserving.

Now, let $V$ be a finite subset of $\nonz{B}$ such that $\mathrm{MCD}_B(V)$, up to associates, is infinite. Then by \cite[Lemma~3.2(6)]{roitman1993polynomial}, for every $n \in \mathbb{N}$, we have $\MCD_B(V) = \MCD_{{\mathcal{T}}_n}(V)$. The family $\{\mathcal{T}_i(B;S)\}_{i \in \mathbb{N}}$ satisfies the conditions of \cite[Lemma~3.1]{roitman1993polynomial}, and so by part 4 of the same lemma, $\MCD_B(V) = \MCD_{R}(V)$. Finally, since $\units{B} = \units{R}$, we conclude that $R$ is not MCD-finite.
\end{proof}

\begin{remark}
In Theorem \ref{MOstrong}, if $S$ is empty, then $R$ is an antimatter domain which is not MCD-finite, and hence by Theorem \ref{mainTheorem}, $R[x]$ is not an IDF domain. Therefore, this theorem entails Theorem \ref{MOTheorem} (although, a completely different construction is used here). In particular, Theorem \ref{MOstrong} shows that, as conjectured in \cite[Problem~1]{malcolmson2009polynomial}, the countability of the domain $D$ is superfluous in Theorem \ref{MOTheorem}.
\end{remark}

\section{More on MCD-finite domains}

We begin this section by a result on the behavior of MCD-finite domains under the $D + M$ construction. First, we mention some general properties of the $D + M$ constructions.

\begin{remark}
Let $T$ be a domain that can be written in the form $K+M$, where $K$ is a field and $M$ is a non-zero maximal ideal. Let $D$ be a subfield of $K$ and $R = D + M$. 

Any element of the form $m$ or $1+m$ (where $m \in M$) is an atom in $R$ if and only if it is an atom in $T$ (The case for $1 + m$ is proved in \cite[Lemma~1.5(i)]{Costa1986DM} and the case for $m$ can be proved similarly). Also, it is not difficult to see that for all $c_1, c_2 \in K$ and $0 \neq m \in M$, $c_1m \assoc{R} c_2m$ if and only if $c_1\nonz{D} = c_2\nonz{D}$. 

Let $x = k+m \in T$ where $m \in M$ and $k \in K$. For convenience, we use the following notation:
\[
\widehat{x} \coloneqq
\begin{cases}
x(=m) & k= 0 \\
1 + k^{-1}m & k \neq 0
\end{cases}
\]

It can easily be proved that for all $x, y \in T$, where $x \not\in M$, if $x \aadd{T} y$, then $\widehat{x} \aadd{R} \widehat{y}$.
\end{remark}

\begin{theorem}
\label{kmnonmcdfinite}
Let $T$ be a domain that can be written in the form $K+M$, where $K$ is a field and $M$ is a non-zero maximal ideal. Let $D$ be a subfield of $K$ and $R = D + M$. 
\begin{enumerate}
\item
Suppose that $M$ contains an atom. If $R$ is MCD-finite, then the group $\nonz{K}/\nonz{D}$ is finite and $T$ is MCD-finite.
\item
Suppose that every element of $M$, up to associates in $T$, has finitely many irreducible divisors in $M$. If $T$ is MCD-finite and the group $\nonz{K}/\nonz{D}$ is finite, then $R$ is MCD-finite.
\end{enumerate}
\end{theorem}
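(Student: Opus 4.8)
The plan is to prove both parts by contraposition, exploiting throughout the arithmetic of the $D+M$ construction recorded in the Remark: in each of $R$ and $T$ the set of non-units is exactly $M\setminus\{0\}$; $M$ is a prime ideal of $T$; $\units{T}\cap R=\units{R}$; the hat operation replaces any element by an associate lying in $M\cup(1+M)\subseteq R$ and preserves maximal common divisors; and for $x\in M$ the single $\assoc{T}$-class of $x$ is contained in $M$ and splits there into exactly $|\nonz{K}/\nonz{D}|$ classes under $\assoc{R}$. One observation is used repeatedly: if a finite subset of $\nonz{R}$ (or of $\nonz{T}$) has at least two non-associate maximal common divisors, then none of them is a unit — a unit maximal common divisor would force every common divisor to be a unit — so each such divisor lies in $M$, and consequently the finite set itself lies in $M$.

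\textbf{Part (1).} Suppose first $\nonz{K}/\nonz{D}$ is infinite; fix an atom $a\in M$ and an element $k\in K\setminus D$, and consider $\{a^2,\,ka^2\}\subseteq M$. I claim the $ca$, $c\in\nonz{K}$, form an infinite set of pairwise non-associate maximal common divisors of this set in $R$. Each $ca$ divides $a^2$ and $ka^2$ in $R$ since $a^2/(ca)=a/c$ and $ka^2/(ca)=ka/c$ lie in $M$; the $ca$ represent $|\nonz{K}/\nonz{D}|$ distinct $\assoc{R}$-classes by the Remark; and each is maximal because $a^2\notin\COMD_R(\{a^2,ka^2\})$ (as $ka^2/a^2=k\notin D=R\cap K$), so a common divisor strictly $R$-above $ca$ would be $\assoc{T}a$, hence $va$ with $v\in\units{T}$, and then $ca\mid_R va$ forces $v/c\in R$, whence (its $K$-component being a nonzero element of $D$) $v/c\in\units{R}$ and $ca\assoc{R}va$ — a contradiction. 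This contradicts MCD-finiteness of $R$. Next suppose $T$ is not MCD-finite and pick a finite $V\subseteq\nonz{T}$ with an infinite family $\{d_i\}$ of pairwise non-associate maximal common divisors; by the observation above $V\subseteq M$ and every $d_i\in M$. For each $v\in V$ at most one index $i$ has $v/d_i$ a unit of $T$ (otherwise $d_i\assoc{T}v$ for two indices), so after discarding finitely many indices $v/d_i\in M\subseteq R$, i.e. $d_i\mid_R v$; hence each surviving $d_i\in\COMD_R(V)$, and the $d_i$ are pairwise non-$\assoc{R}$. Each is maximal in $\COMD_R(V)$: if $d_i\mid_R e$ with $e\in\COMD_R(V)$, then $d_i\mid_T e$ and, by $T$-maximality of $d_i$, $e=ud_i$ with $u\in\units{T}\cap R=\units{R}$, so $e\assoc{R}d_i$. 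Thus $R$ is not MCD-finite, a contradiction.

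\textbf{Part (2), setup.} Assume $R$ is not MCD-finite and fix a finite $W\subseteq\nonz{R}$ with $\MCD_R(W)$ infinite; by the observation above $W\subseteq M$, and any infinite family $\{e_i\}_{i\in I}$ of pairwise non-associate maximal common divisors of $W$ lies in $M$. Because each $\assoc{T}$-class inside $M$ splits into only $|\nonz{K}/\nonz{D}|<\infty$ classes under $\assoc{R}$, the infinitely many $\assoc{R}$-classes among the $e_i$ meet infinitely many $\assoc{T}$-classes; passing to a subfamily, we may assume the $e_i$ are pairwise non-$\assoc{T}$. Each $e_i$ then lies in $\COMD_T(W)$ and divides a fixed $w_0\in W\subseteq M$ in $T$, so $w_0$ acquires infinitely many non-associate divisors in $M$; moreover hypothesis (i) bounds the non-associate irreducible divisors of $w_0$ in $M$ by a finite number, and hypothesis (ii) makes $\MCD_T(W)$ finite.

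\textbf{Part (2), the hard step.} The step I expect to be the main obstacle is turning this configuration into a contradiction, and it is here that (i) and (ii) must be combined. The subtlety is that $\COMD_R(W)$ and $\COMD_T(W)$ carry genuinely different divisibility orders and $T$ need satisfy no chain condition, so $R$-maximality of the $e_i$ does not transfer directly to $T$-maximality: an $R$-maximal common divisor can sit strictly below an element of $\COMD_T(W)$ whose entire $\assoc{T}$-class avoids $\COMD_R(W)$. My plan is: from (ii) write $\MCD_T(W)=\{g_1,\dots,g_p\}$ with each $g_l\in M$ and $g_l\mid_T w_0$; then, by a maximality (Zorn) argument inside $\COMD_T(W)$, show that each $e_i$ — up to a unit of $T$ — either divides some $g_l$ or is the bottom of an infinite strictly ascending chain of common divisors of $W$, forcing $w_0$ to be divisible by arbitrarily long products of non-units of $T$, all lying in $M$; finally use (i) to confine the relevant $M$-divisors of $w_0$ to finitely many $\assoc{T}$-classes. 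Combined with the finite $\assoc{T}$-to-$\assoc{R}$ splitting this bounds the $\assoc{R}$-classes of the $e_i$, contradicting their infinitude. I expect the genuinely delicate points to be the passage from $R$-maximality to a statement controlled by $\MCD_T(W)$, and the degenerate case where $w_0$ has no irreducible divisor in $M$ at all, in which (i) is vacuous and a separate argument is needed.
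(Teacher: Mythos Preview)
Your argument for the first half of Part~(1) (the case $|\nonz{K}/\nonz{D}|=\infty$) is correct and is essentially the paper's argument with $\{a^2,ka^2\}$ in place of $\{tm^2,vm^2\}$.

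There is, however, a genuine gap in your \textbf{Overall approach} that breaks both the second half of Part~(1) and your setup for Part~(2). You assert that ``in each of $R$ and $T$ the set of non-units is exactly $M\setminus\{0\}$''. This is \emph{not} part of the hypotheses: the theorem only assumes $M$ is \emph{a} maximal ideal of $T$, not that $T$ is local, and the paper's own proof explicitly treats the case $y_i\notin M$. Consequently your step ``$v/d_i$ is a non-unit of $T$, hence $v/d_i\in M\subseteq R$'' fails in general: the quotient $v/d_i$ may have the form $k+m$ with $k\in K\setminus D$, in which case $d_i\nmid_R v$ and your $d_i$ need not lie in $\COMD_R(V)$ at all. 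The paper repairs this in two moves you are missing: (a) if infinitely many $y_i$ lie outside $M$, one applies the hat operation to get $\widehat{y_i}\in\MCD_R(\widehat{x_1},\dots,\widehat{x_n})$; (b) if the $y_i$ lie in $M$, one uses the \emph{already established} finiteness of $\nonz{K}/\nonz{D}$ to pass to a subfamily and choose scalars $z_j\in\nonz{K}$ so that $y_i\mid_R z_jx_j$ for all $i$, and then argues with $z_1x_1,\dots,z_nx_n$ instead of $x_1,\dots,x_n$.

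For Part~(2), your plan is incomplete and headed in the wrong direction. No Zorn argument or analysis of infinite chains in $\COMD_T(W)$ is needed, and your worry about the ``degenerate case where $w_0$ has no irreducible divisor in $M$'' is a symptom of missing the key step. The paper's idea is a one-step lift: given $z\in\MCD_R(x_1,\dots,x_n)$, look at the quotients $x_j/z$ in $T$. Either their $T$-gcd is $1$, in which case $z$ is itself a $T$-MCD of the $x_j$; or some non-unit $d\in T$ divides every $x_j/z$, and then $R$-maximality of $z$ forces $d\in M$ (else $z\widehat{d}$ would be a larger common divisor in $R$) and forces $d$ to be an atom and a $T$-MCD of the $x_j/z$, so that $zd$ is a $T$-MCD of the $x_j$. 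Thus every $R$-MCD $z$ satisfies $z\assoc{T} y_i$ or $z\assoc{T} y_i/d$ for some $T$-MCD $y_i$ and some atom $d\in M$ dividing $y_i$; hypothesis~(2) bounds the $y_i$, hypothesis~(1) bounds the atoms $d$, and finiteness of $\nonz{K}/\nonz{D}$ then bounds the $\assoc{R}$-classes.
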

\begin{proof}
\begin{enumerate}
\item
Let $m \in M$ be an atom. Suppose that the group $\nonz{K}/\nonz{D}$ is infinite and let $\{c_i\}_{i \in I}$ be an infinite subset of $\nonz{K}$ such that $c_i\nonz{D} \neq c_j\nonz{D}$ for $i \neq j$.
Fix two elements $t$ and $v$ of $\nonz{K}$ such that $t\nonz{D} \neq v\nonz{D}$.
Then the set $\{c_im\}_{i \in I}$ is an infinite subset of non-associate divisors of $tm^2$ and $vm^2$ in $R$. On the other hand,  all the elements $\{\frac{tm^2}{c_im}\}_{i \in I}$ and $\{\frac{vm^2}{c_im}\}_{i \in I}$ are atoms in $R$ and moreover, for every $i \in I$,  
$ \frac{tm^2}{c_im} \not{\assoc{R}} \frac{vm^2}{c_im} ,$
 and so $\{c_im\}_{i \in I}$ is an infinite set of non-associate MCD's of $tm^2$ and $vm^2$. Therefore, $R$ is not an MCD-finite domain.

Now suppose on the contrary that $T$ is not MCD-finite. So assume that there exist non-associate elements $x_1, \dots, x_n \in \nunits{T}$ with infinite number of non-associate MCD's; say $\{ y_i \}_{i \in I}$. If for some $i \in I$, $y_i \not\in M$, then $\widehat{y_i} \aadd{R} \widehat{x_j}$ for $1 \leq j \leq n$ and $\widehat{y_i} \in \MCD_R(\widehat{x_1}, \dotsc, \widehat{x_n})$. So we may assume that $y_i \in M$ for every $i \in I$. Suppose that
\[x_j = (k_{i,j} + m_{i,j}) y_i \qquad k_{i,j} \in K, m_{i,j} \in M  .\]

Since $\nonz{K}/\nonz{D}$ is finite, we may assume that for every $1 \leq j \leq n$, the elements $k_{i,j}$ belong to the same coset. 
Hence, there exists $z_j \in \nonz{K}$ such that $y_i \aadd{R}  z_j x_j$ for every $i \in I$. Now
\[ \GCD_R \left( \frac{z_1 x_1}{y_i} , \dotsc, \frac{z_n x_n}{y_i} \right) = 1 ,\]
and so $\{ y_i \}_{i \in I}$ is also an infinite set of non-associate MCD's of $z_1 x_1 , \dotsc , z_n x_n$ in $R$, which is a contradiction.
\item
Suppose that $x_1, \dots, x_n \in \nunits{R}$ and, up to associates in $T$, $y_1, \dots, y_m$ are the MCD's of $x_1, \dotsc , x_n$ in $T$. 
 Suppose that $z \in \MCD_R(x_1, \dotsc, x_n)$.
If 
\[ \GCD_T \left( \frac{x_1}{z}, \dotsc , \frac{x_n}{z} \right) = 1, \] 
then $z \assoc{T} y_i$ for some $1 \leq i \leq m$. Now, assume that there exists $d \in \nunits{T}$ such that $d$ is a common divisor of $\frac{x_1}{z}, \dotsc , \frac{x_n}{z}$.
Note that $d$ must be in $M$ since otherwise $z\widehat{d}$ would be a common divisor of $x_1, \dotsc , x_n$ in $R$, which is a contradiction. For a similar reason, $d$ must be an atom and also an MCD of the elements $\frac{x_1}{z}, \dotsc , \frac{x_n}{z}$ in $T$. Hence, $ zd \assoc{T} y_i $ for some $1 \leq i \leq m$. Now, by the hypothesis, the set
\[ A= \left\{ \frac{y_i}{d} \in T\colon\; d \in M, d \in \atoms{T}, 1 \leq i \leq m \right\} \cup \{y_1, \dotsc, y_m \} ,\]
up to associates in $T$, is finite. We proved that for some $ [a] \in A / \!\!\!\assoc{T} $, $z \assoc{T} a$. Finally, if $c_1, \dotsc , c_k$ is a set of coset representatives of $\nonz{D}$ in $\nonz{K}$, then $z \assoc{R} c_i a$ for some $1 \leq i \leq k$. 
\end{enumerate}
\end{proof}
As we have already mentioned in the paragraph after Theorem \ref{mainTheorem}, pre-Schreier domains are MCD-finite, and in fact, any finite set of a pre-Schreier domain, up to associates, has at most one MCD. Hence, every pre-Schreier domain that is not a GCD domain (see, e.g., the paragraph after \cite[Theorem~2.4]{Cohn1968}) is an example of an MCD-finite domain that is not an MCD domain. Conversely, the domain $\mathbb{Q} + X\mathbb{R}[X]$ is a non-Noetherian ACCP (and hence MCD) domain that is not MCD-finite (we recall that for any field extension $F \subseteq K$, the group $\nonz{K}/\nonz{F}$ is finite if and only if $K = F$ or $K$ is finite (see \cite[Theorem~7]{brandis1965multiplikative})). In fact, even Noetherian domains are not necessarily MCD-finite; consider $\mathbb{R} + X\mathbb{C}[X]$. Also, $\mathbb{Z} + X\mathbb{Q}[[X]]$ is an example of a GCD (and hence MCD-finite) domain that is not IDF (The behavior of IDF, ACCP, GCD and Noetherian domains under the $D + M$ construction is studied in \cite[Proposition~4.3]{anderson1990factorization}, \cite[Proposition~1.2]{anderson1990factorization}, \cite[Theorem~11]{Brewer} and \cite[Theorem~4]{Brewer}).
 
 A trivial example of MCD-finite domains are FFD's. In particular, every Krull domain is an MCD-finite domain (see \cite[p.~14]{anderson1990factorization}). In fact, the stronger result \cite[Proposition~1]{Grams1975} also holds for MCD-finite domains. Actually, the proof of that theorem shows that any domain satisfying the hypothesis of the theorem has the following property:
\begin{center}
$(*)$ \text{The intersection of every infinite set of incomparable principal ideals is} 0 \end{center} and this property implies being both IDF and MCD-finite.

Now, we give an example of a domain with property $(*)$ that is neither pre-Schreier nor MCD domain. 

\begin{example}

Let $D$ be an FFD which is not a UFD. For example, let $F \subsetneq K$ be an extension of finite fields and let $D = F + XK[X]$. Then by \cite[Proposition~5.2]{anderson1990factorization}, $D$ is an FFD. The domain $D$ satisfies ACCP and hence is an MCD domain. Since UFD's are exactly GCD domains which satisfy ACCP, we conclude that $D$ is not a pre-Schreier domain.

Now, let $R = D[Z, \{ \frac{X}{Z^n}, \frac{Y}{Z^n}\colon\; n \in \mathbb{N} \cup \{0\}] $. This domain is not an MCD domain and in fact the elements $X$ and $Y$ do not have any MCD's (see \cite[Example~5.1]{roitman1993polynomial}). But $R$ satisfies property $(*)$. To see this, suppose on the contrary that $\{ c_iR \}_{i \in I}$ is an infinite set of incomparable principal ideals of $R$ such that $\bigcap_{i \in I} c_iR \neq 0$. By \cite[Example~2]{Anderson1996FFD}, $D[Z, \frac{1}{Z}]$ is an FFD and hence there exist $i, j \in I$ such that $i \neq j$ and $c_i \assoc{T} c_j$, where $T = (D[Z, \frac{1}{Z}])[X, Y]$. But $c_i \not{\assoc{R}} c_j$ and hence there exists an $n \in \mathbb{N} \cup \{0\}$ such that $c_i = Z^nc_j$, which contradicts the incomparability assumption. Therefore, $R$ satisfies property $(*)$, hence is both IDF and MCD-finite. But it is neither pre-Schreier nor MCD domain.
\end{example}

Finally, we mention another way to embed a domain $D$ into a domain $R$ which is not MCD-finite. This time, for a finite non-singleton subset $A$ of $\nunits{D}$ that does not generate $D$, we construct the domain $R$ in such a way that the set $\mathrm{MCD}_R(A)$, up to associates, becomes infinite.

\begin{example}
\label{ReesEx}
Let $D$ be a domain and let $c_1, \dotsc, c_n$ be non-associate elements in $\nunits{D}$ such that $2 \leq n$ and $D \neq \langle c_1, \dotsc , c_n \rangle$. For every $i \in \mathbb{N} \cup \{0\}$, define the domain $R_i$ by induction as follows:
\begin{enumerate}
\item
Set $R_0 \coloneqq D$ and $I_0 \coloneqq \langle c_1, \dotsc, c_n \rangle_{R_0}$.
\item
For $i \in \mathbb{N}$, set $R_i \coloneqq R_{i-1}[X_i, \{ \frac{a}{X_i}\colon\; a \in I_{i-1} \}]$ and $I_i \coloneqq \langle c_1, \dotsc, c_n \rangle_{R_i}$.
\end{enumerate}
Note that for any $n \in \mathbb{N}$, $I_n \neq R_n$, and hence by \cite[Lemma~2.4]{roitman1993polynomial} and \cite[Lemma~2.10]{roitman1993polynomial}, for every $i \in \mathbb{N}$,
\[ X_1, \dotsc, X_i \in \mathrm{MCD}_{R_i}(c_1, \dotsc, c_n) .\]
Set $R \coloneqq  \bigcup_{i \in \mathbb{N}} R_i$. By \cite[Lemma~2.3(1)]{roitman1993polynomial}, for all $i, j \in \mathbb{N} \cup \{0 \}$ with $i \leq j$, the extension $R_i \subseteq R_j$ is division-preserving, and hence by \cite[Lemma~3.1(3)]{roitman1993polynomial}, \[\{X_i\}_{i \in \mathbb{N}} \subseteq \mathrm{MCD}_R(c_1, \dotsc, c_n) ,\]
and so $R$ is not MCD-finite.
\end{example}

\noindent \textbf{Acknowledgement.} The authors would like to thank the referee whose careful reading and valuable comments improved the paper.

\bibliography{Manuscript}

\end{document}